\newtheorem{thm}{Theorem}
\newtheorem{lem}{Lemma}
\newtheorem{re}{Remark}
\newtheorem{defn}{Definition}
\newcommand{\B}{\mathbb{B}}
\newcommand{\R}{\mathbb{R}}
  \newlength{\titleright}
\begin{document}
\title{\vspace*{-6pc}{\bf On existence and approximation of solutions for nonlinear Hilfer fractional differential equations}}

\vspace{1cm}\author{ {\small D. B. Dhaigude$^{1}$
      \footnote{Corresponding author. Email address: sandeeppb7@gmail@.com,  Tel:+91--9421475347}, Sandeep P. Bhairat$^{2}$}\ \\
{\footnotesize \it $^{1,2}$Department of mathematics, Dr. Babasaheb Ambedkar Marathwada University, Aurangabad--431 004 (M.S.) India.}\\}

\date{}

\maketitle

{\vspace*{0.5pc} \hrule\hrule\vspace*{2pc}}

\hspace{-0.8cm}{\bf Abstract}\\
This paper gives the existence and uniqueness results for solution of fractional differential equations with Hilfer derivative. Using some new techniques and generalizing restrictive conditions imposed on considered function, the iterative scheme for uniformly approximating the solution is established. An example is included to show the applicability of our theoretical results.\\

\hspace{-0.8cm}{\it \footnotesize {\bf Keywords:}} {\small Hilfer derivative; Picard iterative technique; singular fractional differential equation.}\\
{\it \footnotesize {\bf Mathematics Subject Classification}}:{\small 26A33; 26D10; 40A30; 34A08}.\\
\thispagestyle{empty}
\section{Introduction}
From last few decades, the topic of fractional differential equation have been proved to be a spot-light area of research in the field of applied mathematics. Modeling and exact description of various natural phenomena is fruitfully done by many researchers in biology, rheology, control theory and several other branches of science, see the recent monographs \cite{hr,kst,lv,fm,pi,skm}.

The existence and uniqueness of solution of fractional differential equations comprehensively studied using variety of techniques by imposing and/or generalizing different conditions, for example see \cite{dn,kmf,vl,jd2} and references therein. For fractional differential equation containing Hilfer derivative, the existence and uniqueness results can be found in \cite{kmf,db3}. Using fixed point approach and some suitable conditions on the nonlinear function $f,$ results are obtained in weighted space of continuous functions. In general, solving the nonlinear fractional differential equations analytically is not an easy task. Therefore, we used to approximate the solution up to some efficiency \cite{yy}. With the best of our knowledge, the approximation and uniform convergence of the solution of the considered initial value problem (IVP) is not given in \cite{kmf} which is practically more applicable than that of the fixed point approach. In this article, we used some different techniques and obtained the existence and uniqueness results of solution for singular Hilfer fractional differential equation. The computable iterative scheme along with uniform convergence for the solution is also given.

We consider the initial value problem for fractional differential equation
\begin{align}\label{s1}\begin{cases}
D_{a^+}^{\alpha,\beta}x(t)=f(t,x(t)),&\quad 0<\alpha<1,\, 0\leq\beta\leq1,\quad t>{a},\\
\lim_{t\to{a^{+}}}{(t-a)}^{1-\gamma}x(t)=x_0,&\qquad \gamma=\alpha+\beta(1-\alpha),
\end{cases}
\end{align}
where $D_{a^+}^{\alpha,\beta}$ is the Hilfer fractional derivative and $f$ may be singular at $t=a.$ Note that the initial value considered in initial value problem \eqref{s1} is more suitable than that of considered in earlier one in the literature.

The rest of the paper is organised as follows: the next section covers the useful prerequisites. The main results are proved in section 3 followed by an illustrative example. The last section is conclusion part of the article.

\section{Basic tools}
In this section, we collect some basic definitions and a lemma which are important to us in the development of our main results.

Let $-\infty<a<b<+\infty.$ Let $C[a,b], AC[a,b]$ and $C^{n}[a,b]$ be the spaces of continuous, absolutely continuous, $n-$times continuous and continuously differentiable functions on $[a,b],$ respectively. Here $L^{p}(a,b), p\geq1,$ is the space of Lebesgue integrable functions on $(a,b).$ Let the Euler's gamma and beta functions are defined respectively, by
\begin{equation*}
\Gamma(x)=\int_{0}^{+\infty}s^{x-1}e^{-s}ds,\quad \B{(x,y)}=\int_{0}^{1}(1-s)^{x-1}s^{y-1}ds,\quad x>0,y>0.
\end{equation*}
It is well known that $\B(x,y)=\frac{\Gamma(x)\Gamma(y)}{\Gamma(x+y)},$ for $x>0,y>0,$ \cite{kst}.
\begin{defn}\cite{kst}
Let $\Omega=(a,b]$ and $f:(0,\infty)\to\R$ is a real valued continuous function. The Riemann-Liouville fractional integral of a function $f$ of order $\alpha\in{\R}^{+}$ is denoted as $I_{a^+}^{\alpha}f$ and defined by
\begin{equation}\label{d1}
I_{a^+}^{\alpha}f(t)=\frac{1}{\Gamma(\alpha)}\int_{a}^{t}\frac{f(s)ds}{(t-s)^{1-\alpha}},\quad t>a,
\end{equation}
where $\Gamma(\cdot)$ is the Euler's Gamma function.
\end{defn}
\begin{defn}\cite{kst}
Let $\Omega=(a,b]$ and $f:(0,\infty)\to\R$ is a real valued continuous function. The Riemann-Liouville fractional derivative of function $f$ of order $\alpha\in{\R}_{0}^{+}=[0,+\infty)$ is denoted as $D_{a^+}^{\alpha}f$ and defined by
\begin{equation}\label{d2}
D_{a^+}^{\alpha}f(t)=\frac{1}{\Gamma(n-\alpha)}\frac{d^{n}}{dt^{n}}\int_{a}^{t}\frac{f(s)ds}{(t-s)^{\alpha-n+1}},
\end{equation}
where $n=[\alpha]+1,$ and $[\alpha]$ means the integral part of $\alpha,$ provided the right hand side is pointwise defined on $(0,\infty).$
\end{defn}
\begin{defn}\cite{skm} The Caputo fractional derivative $^{C}D_{a^+}^{\alpha}$ of a continuous function $f:(0,\infty)\to\R$ of order $\alpha\in{\R}^{+}$ is given by
\begin{equation}\label{c1}
^{C}D_{a^+}^{\alpha}f(t)=\frac{1}{\Gamma(n-\alpha)}\int_{a}^{t}\frac{f^{(n)}(s)ds}{(t-s)^{\alpha-n+1}},\,\,n-1<\alpha\leq n,
\end{equation}
provided that the right hand side is pointwise defined on $(0,\infty).$
\end{defn}
\begin{defn} \cite{hr} The Hilfer fractional derivative $D_{a^+}^{\alpha,\beta}$ of a function $f\in L^{1}(a,b)$ of order $n-1<\alpha<n$ and type $0\leq\beta\leq1$ is defined by
  \begin{equation}\label{d3}
    D_{a^+}^{\alpha,\beta}f(t)=I_{a^+}^{\beta(n-\alpha)}D^{n}I_{a^+}^{(1-\beta)(n-\alpha)}f(t),
  \end{equation}
where $I_{a^+}^{\alpha}$ and $D_{a^+}^{\alpha}$ are Riemann-Liouville fractional integral and derivative defined by \eqref{d1} and \eqref{d2}, respectively.
\end{defn}
\begin{re}
The Hilfer (generalized Riemann-Liouville) fractional derivative interpolates between the Riemann-Liouville and the Caputo fractional derivatives as
\begin{equation*}
D_{a^+}^{\alpha,\beta}=
\begin{cases}
DI_{a^+}^{1-\alpha}=D_{a^+}^{\alpha}, & \mbox{if } \beta=0, \\
I_{a^+}^{1-\alpha}D= {^{C}D_{a^+}^{\alpha}}, & \mbox{if } \beta=1.
\end{cases}
\end{equation*}
As a consequence of the fact, these two mostly used approaches (Riemann-Liouville and Caputo) turns out to be particular cases of this general (Hilfer) {differential} operator.
\end{re}
The following lemma is of great importance in the proof of our main results.
\begin{lem}\cite{pi} Suppose that $x>0.$ Then $\Gamma(x)=\lim_{m\to+\infty}\frac{m^{x}m!}{x(x+1)(x+2)\cdots(x+m)}.$
\end{lem}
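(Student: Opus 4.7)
The plan is to prove Euler's limit formula by introducing the auxiliary ``cut-off'' Gamma function
\[
  \Gamma_m(x) = \int_{0}^{m} \left(1 - \frac{s}{m}\right)^{m} s^{x-1}\, ds, \qquad m \in \N,
\]
and establishing two facts: (a) $\Gamma_m(x)$ equals the rational expression on the right-hand side of the claim, and (b) $\Gamma_m(x) \to \Gamma(x)$ as $m \to +\infty$. Putting these together immediately yields the formula.

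For step (a), I would substitute $u = s/m$ to obtain $\Gamma_m(x) = m^{x}\int_{0}^{1}(1-u)^{m} u^{x-1}\, du$, and then reduce the integral by $m$ successive integrations by parts, each time differentiating the factor $(1-u)^{k}$ and integrating the power of $u$. The boundary terms vanish at both endpoints (for $k \geq 1$ at $u=1$, and because $x > 0$ at $u = 0$), producing
\[
  \int_{0}^{1}(1-u)^{m} u^{x-1}\, du
  = \frac{m}{x}\cdot\frac{m-1}{x+1}\cdots\frac{1}{x+m-1}\int_{0}^{1} u^{x+m-1}\, du
  = \frac{m!}{x(x+1)\cdots(x+m)}.
\]
Multiplying by $m^{x}$ gives the desired closed form for $\Gamma_m(x)$. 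This step is purely computational.

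For step (b), I would rewrite $\Gamma_m(x) = \int_{0}^{\infty} g_m(s)\, ds$ with $g_m(s) = \left(1-\frac{s}{m}\right)^{m} s^{x-1}\chi_{[0,m]}(s)$. The elementary inequality $\left(1 - \frac{s}{m}\right)^{m} \leq e^{-s}$ valid for $0 \leq s \leq m$ (obtained by taking logarithms and using $\ln(1-t) \leq -t$) gives the domination $|g_m(s)| \leq e^{-s} s^{x-1}$, which is integrable on $(0,\infty)$ precisely because $x > 0$ guarantees integrability near zero and the exponential takes care of infinity. Since $g_m(s) \to e^{-s}s^{x-1}$ pointwise, the Lebesgue dominated convergence theorem yields $\Gamma_m(x) \to \Gamma(x)$.

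The main obstacle is the convergence argument in step (b): one has to justify passing the limit inside the integral, which requires both the pointwise limit $(1 - s/m)^m \to e^{-s}$ and the uniform domination by $e^{-s}$. Once the domination is in hand, dominated convergence closes the proof cleanly; combining with step (a) gives
\[
  \Gamma(x) = \lim_{m \to +\infty} \Gamma_m(x) = \lim_{m \to +\infty} \frac{m^{x}\, m!}{x(x+1)(x+2)\cdots(x+m)},
\]
as claimed.
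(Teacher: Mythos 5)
Your argument is correct. Note, though, that the paper does not prove this lemma at all: it is quoted directly from Podlubny's book \cite{pi} as a known fact, so there is no internal proof to compare against. Your route --- introducing the cut-off integral $\Gamma_m(x)=\int_0^m(1-s/m)^m s^{x-1}\,ds$, evaluating it by repeated integration by parts to get $\frac{m^x m!}{x(x+1)\cdots(x+m)}$ (equivalently, recognizing it as $m^x\B(x,m+1)$, which matches the Beta--Gamma relation the paper itself records), and then passing to the limit via the domination $(1-s/m)^m\chi_{[0,m]}(s)\,s^{x-1}\le e^{-s}s^{x-1}$ and dominated convergence --- is the classical textbook proof of Euler's limit formula, and both steps are carried out soundly: the boundary terms in the integration by parts do vanish because $x>0$ and $m\ge 1$, and the dominating function is integrable on $(0,\infty)$ precisely under the hypothesis $x>0$. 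The only remark worth adding is that one could replace dominated convergence by monotone convergence, since $(1-s/m)^m$ increases to $e^{-s}$ for fixed $s$, which gives a marginally more elementary finish; as written, your proof is complete.
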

Denote $D=[a,a+h], D_{h}=(a,a+h]$, $E=\{x:|x(t-a)^{1-\gamma}-x_0|\leq b\}$ for $h>0,b>0$ and $t\in{D_h}.$ We say a function $x(t)$ is a solution of IVP \eqref{s1} if there exist $l>0$ such that $x\in C^{0}(a,a+l]$ satisfies the equation  $D_{a^+}^{\alpha,\beta}x(t)=f(t,x)$ almost everywhere on $(a,a+l],$ alongwith the condition $\lim_{t\to{a^{+}}}{(t-a)}^{1-\gamma}x(t)=x_0.$ To construct the main results, let us make the following hypotheses:
\begin{description}
\item[(H1)]  $(t,x)\to f(t,(t-a)^{\gamma-1}x(t))$ is defined on ${D}_{h}\times E$ satisfies:
\begin{itemize}
\item[(i)] $x\to f(t,(t-a)^{\gamma-1}x(t))$ is continuous on $E$ for all $t\in{D_{h}}$,\\
 $t\to f(t,(t-a)^{\gamma-1}x(t))$ is measurable on $D_{h}$ for all $x\in E;$\
\item[(ii)] there exist $k>(\beta(1-\alpha)-1)$ and $M\geq0$ such that the relation $|f(t,(t-a)^{\gamma-1}x(t))|\leq M(t-a)^{k}$ holds for all $t\in D_{h}$ and $x\in E,$
\end{itemize}
\item[(H2)] there exists $A>0$ such that $|f(t,(t-a)^{\gamma-1}x_1(t))-f(t,(t-a)^{\gamma-1}x_2(t))|$ $\leq A(t-a)^{k}|x_1-x_2|,$ for all $t\in (a,a+l]$ and $x_1,x_2\in E.$
\end{description}
\begin{re}
In hypothesis \textbf{(H1)}, if $(t-a)^{-k}f(t,(t-a)^{\gamma-1}x(t))$ is continuous on $D\times E,$ one may choose $M=\max_{t\in[a,a+h]}(t-a)^{-k}f(t,(t-a)^{\gamma-1}x(t))$ continuous on ${D_h\times E}$ for all $x\in E.$
\end{re}
\section{Main results}
In this section, we prove our main results. Let $l=\min\bigg{\{h,{\big(\frac{b}{M}\frac{\Gamma(\alpha)}{\B(\alpha,k+1)}\big)}^{\frac{1}{\mu+k}}\bigg\}},\, \mu=1-\beta(1-\alpha).$
\begin{lem}
Suppose that \textbf{(H1)} holds. Then $x:[a,a+l]\to\R$ is a solution of IVP \eqref{s1} if and only if $x:(a,a+l]\to\R$ is a solution of the integral equation
\begin{equation}\label{s2}
x(t)=x_0(t-a)^{\gamma-1}+\int_{a}^{t}\frac{(t-s)^{\alpha-1}}{\Gamma(\alpha)}f(s,x(s))ds.
\end{equation}
\end{lem}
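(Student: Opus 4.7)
The plan is to prove the equivalence by the standard ``integrate, then invert'' strategy, using the decomposition $D_{a^+}^{\alpha,\beta}=I_{a^+}^{\beta(1-\alpha)}D\,I_{a^+}^{(1-\beta)(1-\alpha)}$ together with the semigroup property of the Riemann--Liouville integral. The bookkeeping relies on the two identities $\alpha+\beta(1-\alpha)=\gamma$ and $(1-\beta)(1-\alpha)=1-\gamma$, which collapse the composition with $I_{a^+}^{\alpha}$ into a $\gamma$-order operator.

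For the necessity direction ($\Rightarrow$), I would apply $I_{a^+}^{\alpha}$ to both sides of $D_{a^+}^{\alpha,\beta}x(t)=f(t,x(t))$. The semigroup law gives
\[
I_{a^+}^{\alpha}D_{a^+}^{\alpha,\beta}x(t)=I_{a^+}^{\gamma}D\,I_{a^+}^{1-\gamma}x(t)=I_{a^+}^{\gamma}D_{a^+}^{\gamma}x(t).
\]
The classical Riemann--Liouville inversion formula
\[
I_{a^+}^{\gamma}D_{a^+}^{\gamma}x(t)=x(t)-\frac{[I_{a^+}^{1-\gamma}x](a^+)}{\Gamma(\gamma)}(t-a)^{\gamma-1}
\]
together with $I_{a^+}^{1-\gamma}[(s-a)^{\gamma-1}]=\Gamma(\gamma)$ and the prescribed initial condition $\lim_{t\to a^+}(t-a)^{1-\gamma}x(t)=x_0$ identifies $[I_{a^+}^{1-\gamma}x](a^+)=x_0\Gamma(\gamma)$, which after rearrangement gives exactly \eqref{s2}.

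For the sufficiency direction ($\Leftarrow$), I would apply $D_{a^+}^{\alpha,\beta}$ to \eqref{s2}. The term $x_0(t-a)^{\gamma-1}$ is annihilated because $I_{a^+}^{1-\gamma}(t-a)^{\gamma-1}$ is the constant $\Gamma(\gamma)$, whose ordinary derivative vanishes. For the Volterra term, the computation
\[
D_{a^+}^{\alpha,\beta}I_{a^+}^{\alpha}f(t,x(t))=I_{a^+}^{\beta(1-\alpha)}D\,I_{a^+}^{1-\beta(1-\alpha)}f(t,x(t))
\]
reduces via the Riemann--Liouville inversion (at order $\beta(1-\alpha)$) to $f(t,x(t))$, provided the boundary value $[I_{a^+}^{1-\beta(1-\alpha)}f(\cdot,x(\cdot))](a^+)$ vanishes. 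I would verify this using (H1)(ii) and the Beta integral to obtain the bound
\[
\bigl|[I_{a^+}^{1-\beta(1-\alpha)}f(\cdot,x(\cdot))](t)\bigr|\leq\frac{M\,\Gamma(k+1)}{\Gamma(k+2-\beta(1-\alpha))}(t-a)^{k+1-\beta(1-\alpha)},
\]
whose exponent is strictly positive by the hypothesis $k>\beta(1-\alpha)-1$, so the boundary value vanishes as $t\to a^+$. To close the argument I would recover the initial condition from \eqref{s2} by multiplying through by $(t-a)^{1-\gamma}$ and using the estimate $|I_{a^+}^{\alpha}f(t,x(t))|\leq\tfrac{M\,\B(\alpha,k+1)}{\Gamma(\alpha)}(t-a)^{\alpha+k}$, which produces a remainder of order $(t-a)^{\mu+k}$ with $\mu+k>0$, so the limit reproduces $x_0$.

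The main obstacle is tracking the two boundary terms arising from the Riemann--Liouville inversion formula (once at order $\gamma$ and once at order $\beta(1-\alpha)$), since the fractional operators do not invert each other unconditionally when $f$ is singular at $t=a$. The restriction $k>\beta(1-\alpha)-1$ in (H1)(ii) is precisely what is needed to guarantee that both improper integrals $I_{a^+}^{1-\gamma}x$ and $I_{a^+}^{1-\beta(1-\alpha)}f$ have finite, explicitly identifiable limits at $a^+$; the careful manipulation of the exponents $\gamma$, $\mu=1-\beta(1-\alpha)$, and $\beta(1-\alpha)$ to reconcile these limits is the nontrivial bookkeeping in the proof.
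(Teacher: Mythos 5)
Your proposal is correct and in fact supplies more than the paper's own proof does. The paper does not carry out the operator inversion at all: it only checks, via \textbf{(H1)}(ii) and the Beta integral, that the Volterra term is bounded by $M(t-a)^{\alpha+k}\B(\alpha,k+1)/\Gamma(\alpha)$ and that its product with $(t-a)^{1-\gamma}$ tends to $0$ as $t\to a^{+}$, then asserts ``it follows that'' the integral equation holds and dismisses the converse as ``easy to see.'' You instead execute the standard ``apply $I_{a^+}^{\alpha}$ and invert'' scheme: the decomposition $D_{a^+}^{\alpha,\beta}=I_{a^+}^{\beta(1-\alpha)}D\,I_{a^+}^{(1-\beta)(1-\alpha)}$, the identities $\gamma=\alpha+\beta(1-\alpha)$ and $(1-\beta)(1-\alpha)=1-\gamma$, the Riemann--Liouville inversion formula with its boundary term identified through $\lim_{t\to a^{+}}(t-a)^{1-\gamma}x(t)=x_0$, and, for sufficiency, the vanishing of $[I_{a^+}^{1-\beta(1-\alpha)}f(\cdot,x(\cdot))](a^{+})$, which is exactly where $k>\beta(1-\alpha)-1$ enters; this is precisely the content hidden behind the paper's ``it follows.'' Two points to tighten in a full write-up: the inversion formula $I_{a^+}^{\gamma}D_{a^+}^{\gamma}x=x-\frac{[I_{a^+}^{1-\gamma}x](a^{+})}{\Gamma(\gamma)}(t-a)^{\gamma-1}$ needs the regularity implicit in the paper's notion of solution (that $I_{a^+}^{1-\gamma}x$ is absolutely continuous, equivalently that $D_{a^+}^{\alpha,\beta}x$ exists a.e.\ and is integrable, which your estimate on $I_{a^+}^{\alpha}f$ supports), and the identification $[I_{a^+}^{1-\gamma}x](a^{+})=x_0\Gamma(\gamma)$ from the weighted limit deserves its own short argument (split the integral near $a$ and combine the limit with $I_{a^+}^{1-\gamma}[(s-a)^{\gamma-1}]=\Gamma(\gamma)$). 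With those details added, your route fully justifies both directions of the lemma.
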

\begin{proof} Suppose that $x:(a,a+l]\to\R$ is a solution of IVP \eqref{s1}. Then $|(t-a)^{1-\gamma}x(t)-x_0|\leq b$ for all $t\in(a,a+l].$ From \textbf{(H1)}, there exists a $k>(\beta(1-\alpha)-1)$ and $M\geq0$ such that
\begin{equation*}
  |f(t,x(t))|=|f(t,(t-a)^{\gamma-1}(t-a)^{1-\gamma}x(t))|\leq M(t-a)^{k},\quad \text{for all}\quad t\in(a,a+l].
\end{equation*}
Then we have,
\begin{align*}
\bigg{|}\int_{a}^{t}\frac{(t-s)^{\alpha-1}}{\Gamma(\alpha)}f(s,x(s))ds\bigg{|}&\leq \int_{a}^{t}\frac{(t-s)^{\alpha-1}}{\Gamma(\alpha)}M(s-a)^{k}ds\\
&=M(t-a)^{\alpha+k}\frac{\B(\alpha,k+1)}{\Gamma(\alpha)}.
\end{align*}Clearly,
\begin{equation*}
\lim_{t\to a}(t-a)^{1-\gamma}\int_{a}^{t}\frac{(t-s)^{\alpha-1}}{\Gamma(\alpha)}f(s,x(s))ds=0.
\end{equation*}
It follows that
\begin{equation*}
x(t)=x_0(t-a)^{\gamma-1}+\int_{a}^{t}\frac{(t-s)^{\alpha-1}}{\Gamma(\alpha)}f(s,x(s))ds,\quad t\in(a,a+l].
\end{equation*}
Since $k>(\beta(1-\alpha)-1),$ then $x\in{C^{0}(a,a+l]}$ is a solution of integral equation \eqref{s2}.

Conversely, it is easy to see that $x:(a,a+l]\to\R$ is a solution of integral equation \eqref{s2} implies that $x$ is a solution of IVP \eqref{s1} defined on $[a,a+l].$ This completes the proof.
\end{proof}

Choose a Picard function sequence as
\begin{align*}
  \phi_0(t)&=x_0(t-a)^{\gamma-1},\qquad t\in(a,a+l], \\
  \phi_n(t)=\phi_0(t)+&\int_{a}^{t}\frac{(t-s)^{\alpha-1}}{\Gamma(\alpha)}f(s,\phi_{n-1}(s))ds,\quad t\in(a,a+l],\quad n=1,2,\cdots.
\end{align*}
\begin{lem}
Suppose \textbf{(H1)} holds. Then $\phi_n$ is continuous on $(a,a+l]$ and satisfies $|(t-a)^{1-\gamma}\phi_n(t)-x_0|\leq b.$
\end{lem}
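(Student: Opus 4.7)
The natural approach is induction on $n$, exploiting the inductive hypothesis that $(s-a)^{1-\gamma}\phi_{n-1}(s)\in E$ so that hypothesis \textbf{(H1)}(ii) applies inside the integrand. The base case $n=0$ is immediate: $\phi_0(t)=x_0(t-a)^{\gamma-1}$ is continuous on $(a,a+l]$, and $(t-a)^{1-\gamma}\phi_0(t)-x_0\equiv 0$.

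For the inductive step, assume the claim for $\phi_{n-1}$. Writing $\phi_{n-1}(s)=(s-a)^{\gamma-1}\bigl[(s-a)^{1-\gamma}\phi_{n-1}(s)\bigr]$ and using the inductive hypothesis together with \textbf{(H1)}(ii), I obtain the pointwise bound $|f(s,\phi_{n-1}(s))|\le M(s-a)^{k}$ on $(a,a+l]$. The same beta-function computation already carried out in the previous lemma then gives
\[
\left|\int_{a}^{t}\frac{(t-s)^{\alpha-1}}{\Gamma(\alpha)}f(s,\phi_{n-1}(s))\,ds\right|\le \frac{M\,\B(\alpha,k+1)}{\Gamma(\alpha)}(t-a)^{\alpha+k}.
\]
Multiplying by $(t-a)^{1-\gamma}$ and using $1-\gamma+\alpha=1-\beta(1-\alpha)=\mu$, this yields
\[
\bigl|(t-a)^{1-\gamma}\phi_{n}(t)-x_0\bigr|\le \frac{M\,\B(\alpha,k+1)}{\Gamma(\alpha)}(t-a)^{\mu+k}\le \frac{M\,\B(\alpha,k+1)}{\Gamma(\alpha)}\,l^{\mu+k}\le b,
\]
where the last inequality is exactly the defining property of $l=\min\{h,(b\Gamma(\alpha)/(M\B(\alpha,k+1)))^{1/(\mu+k)}\}$. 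Note that the exponent condition $k>\beta(1-\alpha)-1$ is precisely what guarantees $\mu+k>0$, so this estimate is meaningful.

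The remaining and, to my mind, most delicate step is continuity of $\phi_n$ on $(a,a+l]$. The integrand $(t-s)^{\alpha-1}f(s,\phi_{n-1}(s))/\Gamma(\alpha)$ carries a potential singularity at $s=t$ (since $0<\alpha<1$) and possibly at $s=a$ (if $k<0$). I would fix $t_0\in(a,a+l]$ and, for $t$ in a small neighbourhood of $t_0$ with $t\ge t_0$ (the case $t<t_0$ is symmetric), split
\[
\int_{a}^{t}-\int_{a}^{t_0}=\int_{t_0}^{t}\frac{(t-s)^{\alpha-1}}{\Gamma(\alpha)}f(s,\phi_{n-1}(s))\,ds+\int_{a}^{t_0}\frac{(t-s)^{\alpha-1}-(t_0-s)^{\alpha-1}}{\Gamma(\alpha)}f(s,\phi_{n-1}(s))\,ds.
\]
The first piece is dominated by $M\B(\alpha,k+1)(t-t_0)^{\alpha+k}/\Gamma(\alpha)$ (after a shift), which tends to $0$ as $t\to t_0^{+}$. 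For the second piece, the majorant $(t_0-s)^{\alpha-1}M(s-a)^{k}$ is integrable by the beta-function argument, and the integrand converges pointwise to $0$ as $t\to t_0$, so dominated convergence applies. Continuity of $\phi_0$ is obvious. Hence $\phi_n\in C^{0}(a,a+l]$, completing the induction.
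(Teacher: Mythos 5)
Your argument is correct and follows essentially the same route as the paper: induction on $n$, the beta-function estimate $\bigl|\int_a^t\frac{(t-s)^{\alpha-1}}{\Gamma(\alpha)}f(s,\phi_{n-1}(s))\,ds\bigr|\le \frac{M\,\B(\alpha,k+1)}{\Gamma(\alpha)}(t-a)^{\alpha+k}$, and the definition of $l$ to land inside the bound $b$; you merely supply the continuity details that the paper dispatches with ``clearly''. One small repair in that extra part: your majorant $(t-t_0)^{\alpha+k}$ for the piece $\int_{t_0}^{t}$ only tends to zero when $\alpha+k>0$, which the hypothesis $k>\beta(1-\alpha)-1$ does not guarantee, whereas bounding $(s-a)^{k}$ by the constant $\max\{(t_0-a)^{k},l^{k}\}$ on $[t_0,t]$ gives $O\bigl((t-t_0)^{\alpha}\bigr)$ and works for every admissible $k$.
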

\begin{proof} From \textbf{(H1)}, there exist $k>(\beta(1-\alpha)-1)$ and $M\geq0$ such that $|f(t,(t-a)^{\gamma-1}x)|\leq M(t-a)^{k}$ for all $t\in{D_h}$ and $|x(t-a)^{1-\gamma}-x_0|\leq b.$ If we take $n=1,$ we have
\begin{equation}\label{l1}
  \phi_1(t)=x_0(t-a)^{\gamma-1}+\int_{a}^{t}\frac{(t-s)^{\alpha-1}}{\Gamma(\alpha)}f(s,\phi_{0}(s))ds.
\end{equation}
Then
\begin{equation*}
\bigg{|}\int_{a}^{t}\frac{(t-s)^{\alpha-1}}{\Gamma(\alpha)}f(s,\phi_0(s))ds\bigg{|}\leq \int_{a}^{t}\frac{(t-s)^{\alpha-1}}{\Gamma(\alpha)}M(s-a)^{k}ds=M(t-a)^{\alpha+k}\frac{\B(\alpha,k+1)}{\Gamma(\alpha)}.
\end{equation*}
Clearly, $\phi_1\in{C^{0}(a,a+l]}$ and from equation \eqref{l1}, we have
\begin{align}\label{l2}
  |(t-a)^{1-\gamma}\phi_1(t)-x_0|&\leq (t-a)^{1-\gamma}M(t-a)^{\alpha+k}\frac{\B(\alpha,k+1)}{\Gamma(\alpha)}\nonumber\\
  &\leq Ml^{\alpha+k+1-\gamma}\frac{\B(\alpha,k+1)}{\Gamma(\alpha)}.
\end{align}
Further suppose that $\phi_n\in{C^{0}[a,a+l]}$ and $|(t-a)^{1-\gamma}\phi_n(t)-x_0|\leq b$ for all $t\in[a,a+l].$ We obtain
\begin{equation}\label{l3}
  \phi_{n+1}(t)=x_0(t-a)^{\gamma-1}+\int_{a}^{t}\frac{(t-s)^{\alpha-1}}{\Gamma(\alpha)}f(s,\phi_{n}(s))ds.
\end{equation}
From above discussion, we get $\phi_{n+1}(t)\in {C^{0}(a,a+l]}$ and by equation \eqref{l3},
\begin{align*}
   |(t-a)^{1-\gamma}\phi_{n+1}(t)-x_0|&\leq (t-a)^{1-\gamma}\int_{a}^{t}\frac{(t-s)^{\alpha-1}}{\Gamma(\alpha)}M(s-a)^{k}ds\\
  &=M(t-a)^{\alpha+k+1-\gamma}\frac{\B(\alpha,k+1)}{\Gamma(\alpha)} \\
  &\leq Ml^{\alpha+k+1-\gamma}\frac{\B(\alpha,k+1)}{\Gamma(\alpha)}\leq b.
\end{align*}
Thus, the result is true for $n+1.$ By the mathematical induction principle, the result is true for all $n.$ The proof is thus complete.
\end{proof}

\begin{thm}
  Suppose that \textbf{(H1)} and \textbf{(H2)} holds. Then the sequence $\{(t-a)^{1-\gamma}\phi_n(t)\}$ is uniformly convergent on $[a,a+l].$
\end{thm}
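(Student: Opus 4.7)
My plan is to prove uniform convergence by representing the sequence as a telescoping sum,
\[
(t-a)^{1-\gamma}\phi_n(t)=x_0+\sum_{j=1}^{n}(t-a)^{1-\gamma}\bigl[\phi_j(t)-\phi_{j-1}(t)\bigr],
\]
and applying the Weierstrass $M$-test. The whole argument reduces to producing a uniform bound on the $j$-th increment that is summable.

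The core step will be an induction proving, for every $n\geq 1$ and $t\in(a,a+l]$,
\[
(t-a)^{1-\gamma}\bigl|\phi_n(t)-\phi_{n-1}(t)\bigr|\leq\frac{MA^{n-1}}{\Gamma(\alpha)^{n}}\,(t-a)^{n(k+\mu)}\prod_{j=1}^{n}\B\bigl(\alpha,\,j(k+\mu)-\mu+1\bigr),
\]
where $\mu=1-\beta(1-\alpha)$ (so that $\alpha+k+1-\gamma=k+\mu>0$). The base case $n=1$ is exactly the inequality already established in Lemma~2. For the inductive step, I would subtract the defining recursions for $\phi_{n+1}$ and $\phi_n$, apply \textbf{(H2)} after writing $\phi_j(s)=(s-a)^{\gamma-1}\bigl[(s-a)^{1-\gamma}\phi_j(s)\bigr]$ to get $|f(s,\phi_n)-f(s,\phi_{n-1})|\leq A(s-a)^{k}\bigl|(s-a)^{1-\gamma}(\phi_n-\phi_{n-1})\bigr|$, plug in the inductive bound, and finally invoke the Beta-integral identity $\int_a^t(t-s)^{\alpha-1}(s-a)^{p-1}\,ds=(t-a)^{\alpha+p-1}\B(\alpha,p)$ with $p=(n+1)(k+\mu)-\mu+1$. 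A short bookkeeping check (using $\alpha-\mu=\gamma-1$) confirms that the exponent collapses exactly to $(n+1)(k+\mu)+\gamma-1$, and multiplying by $(t-a)^{1-\gamma}$ yields the claimed bound with the updated product.

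With $(t-a)^{n(k+\mu)}\leq l^{n(k+\mu)}$ uniformly on $[a,a+l]$, write $T_n$ for the resulting numerical bound. Then
\[
\frac{T_{n+1}}{T_n}=\frac{A\,l^{k+\mu}}{\Gamma(\alpha)}\,\B\bigl(\alpha,(n+2)(k+\mu)-\mu+1\bigr),
\]
and the ratio test forces convergence provided $\B(\alpha,p)\to 0$ as $p\to\infty$. This is where Lemma~1 enters: writing $\B(\alpha,p)=\Gamma(\alpha)\Gamma(p)/\Gamma(p+\alpha)$ and using the Gauss limit representation of $\Gamma$, one shows $\Gamma(p)/\Gamma(p+\alpha)\to 0$ as $p\to+\infty$, so $T_{n+1}/T_n\to 0$ and $\sum T_n<\infty$. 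The Weierstrass $M$-test then gives the uniform convergence of the telescoping series, hence of $\{(t-a)^{1-\gamma}\phi_n(t)\}$ on $[a,a+l]$.

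The main obstacle I expect is the bookkeeping in the inductive step: the singular weight $(s-a)^{\gamma-1}$ hidden inside each $\phi_j$ must be peeled off before \textbf{(H2)} is applied, and the resulting exponents $k$, $\mu$, $\gamma-1$ have to line up with the Beta integral so that the inductive shape $(t-a)^{n(k+\mu)}$ is preserved. The decay of $\B(\alpha,p)$ is conceptually routine but needs Lemma~1 to be invoked explicitly, since the paper has not imported any Stirling-type asymptotics.
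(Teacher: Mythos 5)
Your proposal is correct and follows essentially the same route as the paper's proof: the same telescoping decomposition, the same inductive increment bound (your $\prod_{j=1}^{n}\B(\alpha,j(k+\mu)-\mu+1)$ with $\mu=1-\beta(1-\alpha)$ is exactly the paper's $\prod_{i=0}^{n-1}\B(\alpha,(i+1)k+i(\alpha+1-\gamma)+1)$ after the reindexing $j=i+1$, since $\alpha+1-\gamma=\mu$), the same ratio test on the majorant series, and the same use of the Gauss limit representation of $\Gamma$ to show the ratio tends to $0$. Apart from trivial bookkeeping (your base case is the paper's Lemma 3 bound, and a harmless off-by-one in the index of the Beta factor in $T_{n+1}/T_n$), the arguments coincide.
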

\begin{proof} For $t\in[a,a+l],$ consider the series
\begin{equation*}
{(t-a)^{1-\gamma}\phi_0(t)}+{(t-a)^{1-\gamma}[\phi_1(t)-\phi_0(t)]}+\cdots+{(t-a)^{1-\gamma}[\phi_n(t)-\phi_{n-1}(t)]}+\cdots.
\end{equation*}
By relation \eqref{l2} in the proof of Lemma 3,
\begin{equation*}
  (t-a)^{1-\gamma}|\phi_1(t)-\phi_0(t)|\leq M(t-a)^{\alpha+k+1-\gamma}\frac{\B(\alpha,k+1)}{\Gamma(\alpha)}, \qquad t\in [a,a+l].
\end{equation*}
From Lemma 3,
\begin{align*}
(t-a)^{1-\gamma}|\phi_2(t)&-\phi_1(t)|\leq(t-a)^{1-\gamma}\int_{a}^{t}\frac{(t-s)^{\alpha-1}}{\Gamma(\alpha)}|f(s,\phi_1(s))-f(s,\phi_0(s))|ds\\
=&(t-a)^{1-\gamma}\int_{a}^{t}\frac{(t-s)^{\alpha-1}}{\Gamma(\alpha)}|f(s,(s-a)^{\gamma-1}(s-a)^{1-\gamma}\phi_1(s))\\
&\hspace{3cm}-f(s,(s-a)^{\gamma-1}(s-a)^{1-\gamma}\phi_0(s))|ds\\
\leq&(t-a)^{1-\gamma}\int_{a}^{t}\frac{(t-s)^{\alpha-1}}{\Gamma(\alpha)}A(s-a)^{k}|(s-a)^{1-\gamma}\phi_1(s)-(s-a)^{1-\gamma}\phi_0(s)|ds\\
\leq&(t-a)^{1-\gamma}\int_{a}^{t}\frac{(t-s)^{\alpha-1}}{\Gamma(\alpha)}A(s-a)^{k}[(s-a)^{1-\gamma}|\phi_1(s)-\phi_0(s)|]ds\\
\leq&(t-a)^{1-\gamma}\int_{a}^{t}\frac{(t-s)^{\alpha-1}}{\Gamma(\alpha)}A(s-a)^{k}\big[M(s-a)^{\alpha+k+1-\gamma}\frac{\B(\alpha,k+1)}{\Gamma(\alpha)}\big]ds\\
=&AM\frac{(t-a)^{1-\gamma}}{\Gamma(\alpha)}\frac{\B(\alpha,k+1)}{\Gamma(\alpha)}\int_{a}^{t}{(t-s)^{\alpha-1}}(s-a)^{\alpha+2k+1-\gamma}ds\\
=&AM\frac{\B(\alpha,k+1)}{\Gamma(\alpha)}\frac{\B(\alpha,\alpha+2k+2-\gamma)}{\Gamma(\alpha)}(t-a)^{2(\alpha+k+1-\gamma)}.
\end{align*}
Now suppose that
\begin{equation*}
 (t-a)^{1-\gamma}|\phi_{n+1}(t)-\phi_n(t)|\leq A^{n}M(t-a)^{(n+1)(\alpha+k+1-\gamma)}\prod_{i=0}^{n}\frac{\B(\alpha,(i+1)k+i(\alpha+1-\gamma)+1)}{\Gamma(\alpha)}.
\end{equation*}
We have
\begin{align*}
(t-a)^{1-\gamma}|\phi_{n+2}(t)-&\phi_{n+1}(t)|\leq(t-a)^{1-\gamma}\int_{a}^{t}\frac{(t-s)^{\alpha-1}}{\Gamma(\alpha)}|f(s,\phi_{n+1}(s))-f(s,\phi_n(s))|ds\\
=&(t-a)^{1-\gamma}\int_{a}^{t}\frac{(t-s)^{\alpha-1}}{\Gamma(\alpha)}|f(s,(s-a)^{\gamma-1}(s-a)^{1-\gamma}\phi_{n+1}(s))\\
&\hspace{3cm}-f(s,(s-a)^{\gamma-1}(s-a)^{1-\gamma}\phi_n(s))|ds\\
\leq&(t-a)^{1-\gamma}\int_{a}^{t}\frac{(t-s)^{\alpha-1}}{\Gamma(\alpha)}A(s-a)^{k}|(s-a)^{1-\gamma}\phi_{n+1}(s)-(s-a)^{1-\gamma}\phi_n(s)|ds\\
\leq&(t-a)^{1-\gamma}\int_{a}^{t}\frac{(t-s)^{\alpha-1}}{\Gamma(\alpha)}A(s-a)^{k}[(s-a)^{1-\gamma}|\phi_{n+1}(s)-\phi_n(s)|]ds
\end{align*}
\begin{align*}
(t-a)^{1-\gamma}|\phi_{n+2}(t)-\phi_{n+1}(t)|\leq&(t-a)^{1-\gamma}\int_{a}^{t}\frac{(t-s)^{\alpha-1}}{\Gamma(\alpha)}A(s-a)^{k}\bigg[A^{n}M(s-a)^{(n+1)(\alpha+k+1-\gamma)}\\
&\hspace{2.5cm}\times\prod_{i=0}^{n}\frac{\B(\alpha,(i+1)k+i(\alpha+1-\gamma)+1)}{\Gamma(\alpha)}\bigg]ds\\
=&{A^{n+1}M(t-a)^{(n+2)(\alpha+k+1-\gamma)}}\prod_{i=0}^{n+1}\frac{\B(\alpha,(i+1)k+i(\alpha+1-\gamma)+1)}{\Gamma(\alpha)}.
\end{align*}
Thus the result is true for $n+1.$ Thus by mathematical induction, we get
\begin{equation}\label{l4}
(t-a)^{1-\gamma}|\phi_{n+2}(t)-\phi_{n+1}(t)|\leq A^{n+1}Ml^{(n+2)(\alpha+k+1-\gamma)}\prod_{i=0}^{n+1}\frac{\B(\alpha,(i+1)k+i(\alpha+1-\gamma)+1)}{\Gamma(\alpha)}.
\end{equation}
Consider
\begin{equation*}
\sum_{n=1}^{\infty}u_n=\sum_{n=1}^{\infty}MA^{n+1}l^{(n+2)(\alpha+k+1-\gamma)}\prod_{i=0}^{n+1}\frac{\B(\alpha,(i+1)k+i(\alpha+1-\gamma)+1)}{\Gamma(\alpha)}.
\end{equation*}
We obtain
\begin{align*}
\frac{u_{n+1}}{u_n}&=\frac{MA^{n+2}l^{(n+3)(\alpha+k+1-\gamma)}\prod_{i=0}^{n+2}\frac{\B(\alpha,(i+1)k+i(\alpha+1-\gamma)+1)}{\Gamma(\alpha)}}{MA^{n+1}l^{(n+2)(\alpha+k+1-\gamma)}\prod_{i=0}^{n+1}\frac{\B(\alpha,(i+1)k+i(\alpha+1-\gamma)+1)}{\Gamma(\alpha)}}\\
&=Al^{\alpha+k+1-\gamma}\frac{\Gamma((n+3)k+(n+2)(\alpha+1-\gamma)+1)}{\Gamma((n+3)(k+\alpha)+(n+2)(1-\gamma)+1)}.
\end{align*}
By using Lemma 1, we obtain
\begin{align*}
\frac{u_{n+1}}{u_n}&=Al^{\alpha+k+1-\gamma}\frac{\lim_{m\to\infty}\frac{m^{(n+3)k+(n+2)(\alpha+1-\gamma)+1}m!}{((n+3)k+(n+2)(\alpha+1-\gamma)+1)\cdots((n+3)k+(n+2)(\alpha+1-\gamma)+m+1)}}{\lim_{m\to\infty}\frac{m^{(n+3)(k+\alpha)+(n+2)(1-\gamma)+1}m!}{((n+3)(k+\alpha)+(n+2)(1-\gamma)+1)\cdots((n+3)(k+\alpha)+(n+2)(1-\gamma)+m+1)}}
\end{align*}
$\hspace{2.5cm}=Al^{\alpha+k+1-\gamma}[\lim_{m\to\infty}m^{-\alpha}\frac{((n+3)(k+\alpha)+(n+2)(1-\gamma)+1)\cdots((n+3)(k+\alpha)+(n+2)(1-\gamma)+m+1)}
{((n+3)k+(n+2)(\alpha+1-\gamma)+1)\cdots((n+3)k+(n+2)(\alpha+1-\gamma)+m+1)}].$\\ \\
We can see that $\frac{((n+3)(k+\alpha)+(n+2)(1-\gamma)+1)\cdots((n+3)(k+\alpha)+(n+2)(1-\gamma)+m+1)}
{((n+3)k+(n+2)(\alpha+1-\gamma)+1)\cdots((n+3)k+(n+2)(\alpha+1-\gamma)+m+1)}$ is bounded for all $m,n.$ Then $\lim_{n\to\infty}\frac{u_{n+1}}{u_n}=0.$ Thus $\sum_{n=1}^{\infty}u_n$ is convergent.

Hence the series
\begin{equation*}
{(t-a)^{1-\gamma}\phi_0(t)}+{(t-a)^{1-\gamma}[\phi_1(t)-\phi_0(t)]}+\cdots+{(t-a)^{1-\gamma}[\phi_n(t)-\phi_{n-1}(t)]}+\cdots
\end{equation*}
is uniformly convergent for $t\in[a,a+l].$ Therefore the sequence $\{(t-a)^{1-\gamma}\phi_n(t)\}$ is the uniformly convergent sequence on $[a,a+l].$ This completes the proof.
\end{proof}

\begin{thm}
Suppose \textbf{(H1)} and \textbf{(H2)} hold. Then $\phi(t)=(t-a)^{\gamma-1}\lim_{n\to\infty}(t-a)^{1-\gamma}\phi_n(t)$ is a unique continuous solution of integral equation \eqref{s2} defined on $[a,a+l].$
\end{thm}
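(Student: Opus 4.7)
The plan is to harvest the limit function from Theorem 1, identify it as a solution of integral equation \eqref{s2} by passing to the limit in the Picard recursion, and finally deduce uniqueness by iterating the Lipschitz estimate \textbf{(H2)} with the same Beta-function bookkeeping that powered Theorem 1. Setting $\psi_n(t):=(t-a)^{1-\gamma}\phi_n(t)$ and invoking Theorem 1 yields a continuous limit $\psi$ on $[a,a+l]$; defining $\phi(t):=(t-a)^{\gamma-1}\psi(t)$ then gives a function in $C^{0}(a,a+l]$, and taking $n\to\infty$ in the inequality of Lemma 3 shows $|(t-a)^{1-\gamma}\phi(t)-x_0|\leq b$, so $\phi(t)$ remains in the admissible tube $E$ for every $t\in(a,a+l]$.

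Next I would pass to the limit inside the Picard relation
$$\phi_n(t)=\phi_0(t)+\int_{a}^{t}\frac{(t-s)^{\alpha-1}}{\Gamma(\alpha)}f(s,\phi_{n-1}(s))\,ds.$$
Using hypothesis \textbf{(H2)}, the same expression with $\phi$ in place of $\phi_{n-1}$ differs from $\phi_n(t)$ by at most $A\,\|\psi_{n-1}-\psi\|_\infty\,(t-a)^{\alpha+k}\B(\alpha,k+1)/\Gamma(\alpha)$, which vanishes as $n\to\infty$ for each fixed $t\in(a,a+l]$. Passing to the limit then forces $\phi$ to satisfy \eqref{s2} on $(a,a+l]$, and Lemma 2 converts this into a solution of IVP \eqref{s1}.

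For uniqueness, let $\tilde\phi\in C^{0}(a,a+l]$ be another solution of \eqref{s2} with the same initial condition, and set $v(t):=(t-a)^{1-\gamma}|\phi(t)-\tilde\phi(t)|$. Both solutions lie inside $E$, so $V:=\sup_{t\in(a,a+l]} v(t)<\infty$. A single application of \textbf{(H2)} gives
$$v(t)\leq AV(t-a)^{\alpha+k+1-\gamma}\frac{\B(\alpha,k+1)}{\Gamma(\alpha)},$$
and iterating exactly as in the proof of Theorem 1 leads to
$$v(t)\leq A^{n}V(t-a)^{n(\alpha+k+1-\gamma)}\prod_{i=0}^{n-1}\frac{\B(\alpha,(i+1)k+i(\alpha+1-\gamma)+1)}{\Gamma(\alpha)}$$
for every $n\geq 1$. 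The ratio-test computation already carried out in Theorem 1 (resting on Lemma 1) drives the right-hand side to zero as $n\to\infty$, so $v\equiv 0$ and $\phi=\tilde\phi$. The genuinely delicate part of the argument is this uniqueness step: because \textbf{(H1)}(ii) permits $k$ to be negative, the Lipschitz weight $A(s-a)^{k}$ is singular at $s=a$, so a classical Gronwall inequality does not apply directly and one must re-run the Beta-function iteration from Theorem 1.
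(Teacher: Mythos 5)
Your proposal is correct, and the existence half follows the paper's own route: pass to the limit in the Picard recursion, using the uniform convergence of $(t-a)^{1-\gamma}\phi_n(t)$ from Theorem 1 together with \textbf{(H2)} to push the limit through the integral (the paper phrases this via $(s-a)^{-k}f(s,\phi_{n-1}(s))\to (s-a)^{-k}f(s,\phi(s))$ uniformly, you via the bound $A\,\|\psi_{n-1}-\psi\|_\infty (t-a)^{\alpha+k}\B(\alpha,k+1)/\Gamma(\alpha)$; these are the same estimate). Where you genuinely diverge is uniqueness. The paper compares the Picard iterates with the putative second solution $\psi$: it shows by induction that $(t-a)^{1-\gamma}|\phi_n(t)-\psi(t)|\leq A^{n}M(t-a)^{(n+1)(\alpha+k+1-\gamma)}\prod_{i=0}^{n}\B(\alpha,(i+1)k+i(\alpha+1-\gamma)+1)/\Gamma(\alpha)$, the base step using the growth bound $|f(t,\psi(t))|\leq M(t-a)^{k}$ from \textbf{(H1)}, so that $\phi_n\to\psi$ in the weighted sense and hence $\psi$ must equal the limit $\phi$. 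You instead estimate the weighted difference $v(t)=(t-a)^{1-\gamma}|\phi(t)-\tilde\phi(t)|$ of the two solutions directly, seed the iteration with the crude bound $V\leq 2b$ coming from membership in $E$, and re-run the same Beta-function iteration; your exponent and product bookkeeping check out (the exponent $k+n(\alpha+k+1-\gamma)+1=(n+1)k+n(\alpha+1-\gamma)+1$ reproduces exactly the factors in the paper's product), and the ratio-test computation of Theorem 1 indeed sends the bound to zero since $\alpha+k+1-\gamma=k+\mu>0$. Your version is the standard iterated-Gronwall argument for a singular kernel and uses only \textbf{(H2)} plus boundedness in the induction, at the price of assuming both solutions stay in the tube $E$ (an assumption the paper also makes, implicitly, when it writes $(t-a)^{1-\gamma}|\psi(t)|\leq b$); the paper's version buys the slightly stronger conclusion that the Picard sequence converges to \emph{any} solution of \eqref{s2}, which is what it reuses in Theorem 3. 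Your closing remark that the singular weight $A(s-a)^{k}$ with possibly negative $k$ rules out a naive classical Gronwall inequality, forcing the Beta-function iteration, is exactly the right diagnosis.
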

\begin{proof} Since $\phi(t)=(t-a)^{\gamma-1}\lim_{n\to\infty}(t-a)^{1-\gamma}\phi_n(t)$ on $[a,a+l],$ and by Lemma 3, we can have $(t-a)^{1-\gamma}|\phi(t)-x_0|\leq b.$ Then
\begin{align*}
  |f(t,\phi_{n}(t))-f(t,\phi(t))|\leq A(t-a)^{k}&|\phi_{n}(t)-\phi(t)|,\quad t\in(a,a+l],\\
  (t-a)^{-k}|f(t,\phi_{n}(t))-f(t,\phi(t))|&\leq A|\phi_{n}(t)-\phi(t)|\to0
\end{align*}
uniformly as $n\to\infty$ on $(a,a+l].$ Therefore
\begin{align*}
(t-a)^{1-\gamma}\phi(t)&=\lim_{n\to\infty}\phi_{n}(t)\\
&=x_0+\lim_{n\to\infty}(t-a)^{1-\gamma}\int_{a}^{t}\frac{(t-s)^{\alpha-1}}{\Gamma(\alpha)}(s-a)^{k}((s-a)^{-k}f(s,\phi_{n-1}(s)))ds\\
&=x_0+(t-a)^{1-\gamma}\lim_{n\to\infty}\int_{a}^{t}\frac{(t-s)^{\alpha-1}}{\Gamma(\alpha)}(s-a)^{k}((s-a)^{-k}f(s,\phi_{n-1}(s)))ds\\
&=x_0+(t-a)^{1-\gamma}\int_{a}^{t}\frac{(t-s)^{\alpha-1}}{\Gamma(\alpha)}(s-a)^{k}\lim_{n\to\infty}((s-a)^{-k}f(s,\phi_{n-1}(s)))ds\\
&=x_0+(t-a)^{1-\gamma}\int_{a}^{t}\frac{(t-s)^{\alpha-1}}{\Gamma(\alpha)}f(s,\phi(s))ds.
\end{align*}
Then $\phi$ is a continuous solution of integral equation \eqref{s2} defined on $[a,a+l].$

To prove uniqueness of solution, suppose that $\psi(t)$ defined on $(a,a+l]$ is also solution of integral equation \eqref{s2}. Then $(t-a)^{1-\gamma}|\psi(t)|\leq b$ for all $t\in(a,a+l]$ and
\begin{equation*}
  \psi(t)=x_0(t-a)^{\gamma-1}+\int_{a}^{t}\frac{(t-s)^{\alpha-1}}{\Gamma(\alpha)}f(s,\phi(s))ds,\quad t\in(a,a+l].
\end{equation*}
It is sufficient to prove that $\phi(t)\equiv\psi(t)$ on $(a,a+l].$ From \textbf{(H1)}, there exists a $k>(\beta(1-\alpha)-1)$ and $M\geq0$ such that
\begin{equation*}
  |f(t,\psi(t))|=|f(t,(t-a)^{\gamma-1}(t-a)^{1-\gamma}\psi(t))|\leq M(t-a)^{k},
\end{equation*}
for all $t\in(a,a+l].$ Therefore
\begin{align*}
(t-a)^{1-\gamma}|\phi_{0}(t)-\psi(t)|=&(t-a)^{1-\gamma}\bigg|\int_{a}^{t}\frac{(t-s)^{\alpha-1}}{\Gamma(\alpha)}f(s,\psi(s))ds\bigg|\\
&\leq(t-a)^{1-\gamma}\int_{a}^{t}\frac{(t-s)^{\alpha-1}}{\Gamma(\alpha)}M(s-a)^{k}ds\\
&=M(t-a)^{\alpha+k+1-\gamma}\frac{\B(\alpha,k+1)}{\Gamma(\alpha)}.
\end{align*}
Furthermore, we have
\begin{align*}
(t-a)^{1-\gamma}|\phi_{1}(t)-\psi(t)|=&(t-a)^{1-\gamma}\bigg|\int_{a}^{t}\frac{(t-s)^{\alpha-1}}{\Gamma(\alpha)}[f(s,\phi_0(s))-f(s,\psi(s))]ds\bigg|\\
&\leq AM \frac{\B(\alpha,k+1)}{\Gamma(\alpha)}\frac{\B(\alpha,\alpha+2k+2-\gamma)}{\Gamma(\alpha)}(t-a)^{2(\alpha+k+1-\gamma)}.
\end{align*}
We suppose that
\begin{equation*}
(t-a)^{1-\gamma}|\phi_{n}(t)-\psi(t)|\leq A^{n}M(t-a)^{(n+1)(\alpha+k+1-\gamma)}\prod_{i=0}^{n}\frac{\B(\alpha,(i+1)k+i(\alpha+1-\gamma)+1)}{\Gamma(\alpha)}.
\end{equation*}
Then
\begin{align*}
(t-a)^{1-\gamma}|\phi_{n+1}(t)-\psi(t)|\leq&(t-a)^{1-\gamma}\bigg|\int_{a}^{t}\frac{(t-s)^{\alpha-1}}{\Gamma(\alpha)}[f(s,\phi_n(s))-f(s,\psi(s))]ds\bigg|\\
\leq& A^{n+1}M(t-a)^{(n+2)(\alpha+k+1-\gamma)}\prod_{i=0}^{n+1}\frac{\B(\alpha,(i+1)k+i(\alpha+1-\gamma)+1)}{\Gamma(\alpha)}\\
\leq& A^{n+1}Ml^{(n+2)(\alpha+k+1-\gamma)}\prod_{i=0}^{n+1}\frac{\Gamma((i+1)k+i(\alpha+1-\gamma)+1)}{\Gamma((i+1)(\alpha+k)+i(1-\gamma)+1)}.
\end{align*}
By using the same arguments used in the proof of Theorem 1, we obtain
\begin{equation*}
  \sum_{n=1}^{\infty}A^{n+1}Ml^{(n+2)(\alpha+k+1-\gamma)}\prod_{i=0}^{n+1}\frac{\Gamma((i+1)k+i(\alpha+1-\gamma)+1)}{\Gamma((i+1)(\alpha+k)+i(1-\gamma)+1)}
\end{equation*}
is convergent. Thus $A^{n+1}Ml^{(n+2)(\alpha+k+1-\gamma)}\prod_{i=0}^{n+1}\frac{\Gamma((i+1)k+i(\alpha+1-\gamma)+1)}{\Gamma((i+1)(\alpha+k)+i(1-\gamma)+1)}\to0$ as $n\to\infty.$ We observe that $\lim_{n\to\infty}(t-a)^{1-\gamma}\phi_n(t)=(t-a)^{1-\gamma}\psi(t)$ uniformly on $[a,a+l].$ Thus $\phi(t)\equiv\psi(t)$ on $(a,a+l].$
\end{proof}

\begin{thm}
Suppose that \textbf{(H1)} and \textbf{(H2)} holds. Then the IVP \eqref{s1} has a unique continuous solution $\phi$ defined on $(a,a+l]$ and $\phi(t)=(t-a)^{\gamma-1}\lim_{n\to\infty}(t-a)^{1-\gamma}\phi_{n}(t)$ with
\begin{align*}
  \phi_0(t)&=x_0(t-a)^{\gamma-1},\qquad t\in(a,a+l], \\
  \phi_n(t)=\phi_0(t)&+\int_{a}^{t}\frac{(t-s)^{\alpha-1}}{\Gamma(\alpha)}f(s,\phi_{n-1}(s))ds,\quad t\in(a,a+l],\, n=1,2,\cdots.
\end{align*}
\end{thm}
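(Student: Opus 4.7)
The plan is to deduce Theorem 3 directly from the machinery already in place, since all the analytic work has been done in Lemma 2 and Theorems 1 and 2. First I would invoke Lemma 2 to reduce the IVP \eqref{s1} to the equivalent integral equation \eqref{s2}: under \textbf{(H1)}, a function $x:(a,a+l]\to\R$ is a solution of \eqref{s1} if and only if it is a solution of \eqref{s2}. So existence and uniqueness for the IVP is equivalent to existence and uniqueness of a continuous solution of \eqref{s2}.

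For existence, I would use the Picard sequence $\{\phi_n\}$ specified in the statement. Lemma 3 (using only \textbf{(H1)}) shows that each $\phi_n$ is continuous on $(a,a+l]$ and that $(t-a)^{1-\gamma}\phi_n(t)$ stays in the ball of radius $b$ about $x_0$, so the iteration remains inside the admissible set $E$. Adding the Lipschitz-type hypothesis \textbf{(H2)}, Theorem 1 then yields uniform convergence of $\{(t-a)^{1-\gamma}\phi_n(t)\}$ on $[a,a+l]$. Finally, Theorem 2 identifies the limit $\phi(t)=(t-a)^{\gamma-1}\lim_{n\to\infty}(t-a)^{1-\gamma}\phi_n(t)$ as a continuous solution of \eqref{s2} on $[a,a+l]$, and simultaneously shows that it is the only one.

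Pulling the uniqueness from \eqref{s2} back to \eqref{s1} through Lemma 2 closes the loop and completes the argument. I do not expect any genuine obstacle: the proof is essentially one line stating \textbf{``combine Lemma 2 with Theorems 1 and 2.''} The only subtlety worth flagging is that, for the uniqueness conclusion of Theorem 2 to apply to an arbitrary candidate solution $\psi$ of the IVP, one must ensure $\psi$ automatically satisfies $|(t-a)^{1-\gamma}\psi(t)-x_0|\le b$ on $(a,a+l]$; this follows from the initial condition $\lim_{t\to a^+}(t-a)^{1-\gamma}\psi(t)=x_0$, the continuity of $\psi$, and the definition of $l$ (which already encodes $b$ through the factor $(b/M)^{1/(\mu+k)}$), and it was used implicitly in the proof of Lemma 2. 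Once this is noted, the assembly is immediate.
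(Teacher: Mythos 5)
Your proposal is correct and follows exactly the paper's own argument: the paper proves Theorem 3 in one line by combining Lemma 2 (equivalence of the IVP with the integral equation \eqref{s2}) with Theorem 2 (the Picard limit is the unique continuous solution of \eqref{s2}), with Theorem 1 and Lemma 3 implicitly supplying the convergence and the invariance of the ball $|(t-a)^{1-\gamma}x-x_0|\leq b$. The subtlety you flag about an arbitrary candidate solution staying in that ball is a fair observation that the paper leaves implicit, but it does not change the route.
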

\begin{proof} From Lemma 2 and Theorem 2, we can easily obtain that $\phi(t)=(t-a)^{\gamma-1}\lim_{n\to\infty}(t-a)^{1-\gamma}\phi_n(t)$ is a unique continuous solution of IVP \eqref{s1} defined on $(a,a+l]$. Thus the proof is ended here.
\end{proof}
\section{An example.}
Consider the following singular fractional IVP
\begin{equation}\label{e}
\begin{cases}
&D_{0+}^{\frac{1}{2},\frac{1}{2}}x(t)=t^{-\frac{1}{3}}[1+t(x(t))^{\frac{4}{3}}],\quad t>{0},\\
&\lim_{t\to{0}}{t^{\frac{1}{4}}}x(t)=3.
\end{cases}
\end{equation}
Let us choose $h=10,b=8.$ In correspondence with IVP \eqref{s1}, we have $\alpha=\frac{1}{2}$, $\beta=\frac{1}{2},\gamma=\frac{3}{4}, a=0$ with $k={-\frac{1}{3}}.$ Here $f(t,x)=t^{-\frac{1}{3}}[1+t(x(t))^{\frac{4}{3}}]$ is singular at $t=0,$ and $x_0=3.$ In the light of Theorem 3, we obtain
\begin{equation*}
M=\max_{t\in[0,10],x\in[-5,11]}[t^{-k}f(t,x(t))]=[1+\sqrt[3]{(11)^{4}}]\approx 25.46
\end{equation*}
and
\begin{align*}
l=\min\bigg\{h,\bigg(\frac{b}{M}\frac{\Gamma(\alpha)}{\B(\alpha,k+1)}\bigg)^{\frac{1}{\mu+k}}\bigg\}=\min\bigg\{10,\bigg(\frac{8}{25.46}\frac{\Gamma(\frac{1}{2})}{\B(\frac{1}{2},\frac{2}{3})}\bigg)^{{\frac{12}{5}}}\bigg\}.
\end{align*}
It follows that all the conditions of Theorem 3 are satisfied. Clearly, the IVP \eqref{e} has a unique solution
\begin{equation*}
\phi(t)=t^{-\frac{1}{4}}\lim_{n\to\infty}t^{\frac{1}{4}}\phi_{n}(t),\quad t\in[0,l]
\end{equation*}
with the choice of Picard function sequence
\begin{align*}
  \phi_0(t)&=x_0t^{-\frac{1}{4}},\qquad t\in(0,l], \\
  \phi_n(t)=3t^{-\frac{1}{4}}+&\int_{0}^{t}\frac{(t-s)^{-\frac{1}{2}}}{\Gamma(\frac{1}{2})}s^{-\frac{1}{3}}[1+(\phi_{n-1}(s))^{\frac{4}{3}}]ds,\quad t\in(0,l],\quad n=1,2,\cdots.
\end{align*}
\begin{re}
The definite interval of solution for which the solution exists could not be determined by the fixed point theory. Whereas, the existence of solution for the singular IVP \eqref{e} is obtained in very short interval $(0,l],$ where $l\approx0.4$ by the Picard iterative scheme. The initial condition of \eqref{e} for $t^{\frac{1}{4}}x(t)$ practically sounds as $t\to0$ with $t^{\frac{1}{4}}x(t)\approx0.8x(t).$
\end{re}

\section{Concluding remarks} The existence and uniqueness of solution for a general class of fractional differential equation is obtained with the help of Picard's successive approximations. The function $f(t,x)$ considered without assuming the  monotonic property and the iterative scheme is developed for approximating the solution. With the help of traditional convergence criteria, the ratio test, the uniform convergence of solution of the considered IVP is established. Our results essentially improves / generalizes the existing results.


\begin{thebibliography}{99}
\bibitem{dn}\text{K. Diethelm, N. J. Ford,}\emph{ Analysis of fractional differential equations,} J. Math. Anal. Appl., Vol. 265, (2002), 229--248.

\bibitem{kmf}\text{K. M. Furati, M. D. Kassim and N.e.Tatar,}\emph{ Existence and uniqueness for a problem involving Hilfer fractional derivative,} Computers Math. Appl., (2012), 1616--1626.

\bibitem{hr}\text{R. Hilfer,}\emph{ Applications of Fractional Calculus in Physics}, World Scientific, Singapore, (2000).

\bibitem{db3}\text{D. B. Dhaigude and Sandeep P. Bhairat,}\emph{ Existence and uniqueness of solution of Cauchy-type problem for Hilfer fractional differential equations,} arXiv:1704.02174v1 [math.CA] 7 Apr 2017.

\bibitem{kst}\text{A. A. Kilbas, H. M. Srivastava and J. J. Trujillo,}\emph{ Theory and Applications of Fractional Differential Equations}, North-Holland Mathematics Studies 204, Editor: Jan Van Mill, Elsevier, Amsterdam, The Netherlands, (2006).

\bibitem{lv}\text{V. Lakshmikantham,}\emph{ Theory of Fractional Dynamic Systems}, Cambridge Scientific Publishers, (2009).

\bibitem{vl}\text{V. Lakshmikantham and A. S. Vatsala,}\emph{ Basic theory of fractional differential equations}, Nonlinear Anal., Vol. 69, (2008), 2677--2682.

\bibitem{fm} \text{F. Mainardi,} \emph{Fractional Calculus and Waves in Linear Viscoelasticity,} an Introduction to Mathematical Model, Imperical College Press, World Scientific Publishing, London, (2010).

\bibitem{jd2}\text{J. A. Nanware, D. B. Dhaigude,}\emph{ Existence and uniqueness of solutions of differential equations of fractional order with integral boundary conditions,} J. Nonlinear Sci. Appl. 7, (2014), 246--254.

\bibitem{pi}\text{I. Podlubny,}\emph{ Fractional Differential Equations,} in: Mathematics in Science and Engineering, Vol. 198, Acad. Press, San-Diego, (1999).

\bibitem{skm} \text{S. G. Samko, A. A. Kilbas, O. I. Marichev,}\emph{ Fractional Intergrals and Derivatives: Theory and Applications}, Gorden and Breach, Amsterdam, (1987), (Engl. Trans. from Russian, 1993).

\bibitem{yy} \text{X. Yang and Y. Liu,}\emph{ Picard iterative provesses for initial value problems of singular fractional differential equations}, Advances in Difference Equations, 2014 (102), 2014.

\end{thebibliography}
\end{document}